\renewcommand{\phi}{\varphi}
\renewcommand{\epsilon}{\varepsilon}
\newtheorem{theorem}{Theorem}
\newtheorem{lemma}[theorem]{Lemma}
\newtheorem{proposition}[theorem]{Proposition}
\newtheorem{corollary}[theorem]{Corollary}
\newcommand\E{{\mathbb E}}
\renewcommand\P{{\mathbb P}}
\newcommand\Bi{\operatorname{Bi}}
\begin{document}
\title{Packing random graphs and hypergraphs}
\author{
B\'ela Bollob\'as%
\footnote{Department of Pure Mathematics and Mathematical Statistics, Wilberforce Road, Cambridge CB3
0WB, UK; {\em and}  Department of
Mathematical Sciences, University of Memphis, Memphis TN38152,
USA; {\em and} 
London Institute for Mathematical Sciences, 35a South St, Mayfair,
London W1K 2XF, UK; email: bb12@cam.ac.uk.  Research supported in part
by NSF grant ITR 0225610; and by MULTIPLEX no. 317532.} 
\and
Svante Janson%
\footnote{Department of Mathematics, Uppsala University, PO Box 480,
SE-751~06 Uppsala, Sweden;
email: svante.janson@math.uu.se.
Research supported in part by the Knut and Alice Wallenberg Foundation.
}
\and
Alex Scott%
\footnote{
Mathematical Institute,
University of Oxford,
Andrew Wiles Building,
Radcliffe Observatory Quarter,
Woodstock Road,
Oxford OX2 6GG, UK;
email: scott@maths.ox.ac.uk.}
}
\date{23 August, 2014; revised 24 February 2016}
\maketitle

\begin{abstract}
We determine to within a constant factor the threshold for the property that two random $k$-uniform hypergraphs
with edge probability $p$ have an edge-disjoint packing into the same vertex set.  More generally, we allow the hypergraphs to have different densities.
In the graph case, we prove a stronger result, on packing a random graph with a fixed graph.
\end{abstract}

\section{Introduction}

Let $G_1$ and $G_2$ be two $k$-uniform hypergraphs 
of order $n$.  We say that $G_1$ and $G_2$ can be \emph{packed} if they can
be placed onto the same vertex set so that their edge sets are disjoint.   

In the graph case, quite a lot is known.  
Bollob\'as and Eldridge \cite{BE76} and Catlin \cite{C74} independently conjectured that if 
$(\Delta(G_1)+1)(\Delta(G_2)+1)
\le n+1$ then $G_1$ and $G_2$ can be packed.
Sauer and Spencer \cite{SS78}
proved that graphs $G_1$ and $G_2$ 
of order $n$ can be packed if $\Delta(G_1)\Delta(G_2)<n/2$.
Let us note that the conjectured bound would be tight: suppose that $n=ab-2$, and let 
$G_1=(b-1)K_a \cup K_{a-2}$ (the vertex-disjoint
union of $b-1$ complete graphs of order $a$ and a complete graph of order $a-2$)
and $G_2=(a-1)K_b\cup K_{b-2}$.
Then $(\Delta(G_1)+1)(\Delta(G_2)+1)=n+2$,
but $G_1$ and $G_2$ cannot be packed.

For fixed $k\ge3$,
the graph example given above is easy to generalize: 
suppose that $n=(a-1)(b-1)(k-1)+a+b-3$.  Let 
$G_1$ be the vertex-disjoint
union of $b-1$ complete $k$-uniform graphs of order $(a-1)(k-1)+1$ and $a-2$ isolated vertices;
let 
$G_2$ be the vertex-disjoint
union of $a-1$ complete $k$-uniform graphs of order $(b-1)(k-1)+1$ and $b-2$ isolated vertices.
Then $\Delta(G_1)\Delta(G_2)=\Theta(a^{k-1}b^{k-1})=\Theta(n^{k-1})$,
but $G_1$ and $G_2$ cannot be packed.  
For another family of examples, choose $r<k$ and fix an $r$-set
$A\subset[n]$.  Let $G_1$ have all edges containing $A$,
and $G_2$ be an $(n,k,r)$-design (these are now known to exist for suitable $n$: see Keevash \cite{K14}).
$G_1$ and $G_2$ cannot be packed, and we have $\Delta(G_1)=\Theta(n^{k-r})$ and $\Delta(G_2)=\Theta(n^{r-1})$, and so again
$\Delta(G_1)\Delta(G_2)=\Theta(n^{k-1})$.
On the positive side, much less is known.
Teirlinck \cite{T77} (see
Alon \cite{A94} for further results and discussion)
showed that, for $n\ge7$, any two Steiner triple systems $G_1$, $G_2$ can be
packed: note that these satisfy
$\Delta(G_1)\Delta(G_2)=\Theta(n^2)$.  
There are also some nice results when one of $G_1$ and $G_2$ has very small maximal degree: see 
R\"odl, Ruci\'nski and Taraz \cite{RRT} and 
Conlon \cite{C}.

In this paper, we consider what happens when $G_1$ and $G_2$ are {\em random} hypergraphs.
For integers $k,n$ and $p\in[0,1]$, we write $\mathcal G(n,k,p)$ for the random $k$-uniform hypergraph 
on $n$ vertices in which each possible edge is present indepedently with probability $p$; 
when $k=2$, we write $\mathcal G(n,p)=\mathcal G(n,2,p)$.
In the graph case, with $G_1\in\mathcal G(n,p)$ and $G_2\in\mathcal G(n,q)$, the extremal results
mentioned above suggest that we should expect a condition of form $pqn\le c$  (for suitable $c$) to be able to pack $G_1$ and $G_2$.  
More generally, for $k$-uniform hypergraphs, we might hope for a condition of form $pqn^{k-1}\le c$, as this would give 
$\Delta(G_1)\Delta(G_2)=O(n^{k-1})$ with high probability
(i.e.~with probability $1-o(1)$ as $n\to\infty$)
provided $p$, $q$ are not extremely small (for instance $\min\{p,q\}\gg\log n/n^{k-1}$ is enough).
In fact, we shall show here that it is possible to pack rather denser graphs: if $G_1$ and $G_2$ are both random then we can allow an additional factor 
$\log n$ in the product $pqn^{k-1}$, but not more.
(We note that a similar phenomenon occurs when we try to minimize the overlap of two random hypergraphs: see 
Bollob\'as and Scott \cite{BS} and
Ma, Naves and Sudakov \cite{MNS}.)

We will prove the following theorem.

\begin{theorem}\label{upfront}
Let $\delta\in(0,1)$.  For every $k\ge2$, there exists $\epsilon>0$ such that the following holds.  Let $p=p(n)$ and $q=q(n)$ be positive reals such that
  \begin{itemize}
   \item $\max\{p,q\}\le 1-\delta$
   \item $pq\le\epsilon\log n /n^{k-1}$.
  \end{itemize}
Let $G_1\in\mathcal G(n,k,p)$ and
$G_2\in\mathcal G(n,k,q)$ be random $k$-uniform hypergraphs of order $n$.  Then, with high probability, 
there is a packing of $G_1$ and $G_2$.
\end{theorem}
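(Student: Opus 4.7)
The plan is to generate a candidate packing from a uniformly random bijection $\sigma:[n]\to[n]$, chosen independently of $G_1$ and $G_2$, and then correct the resulting conflicts by small modifications of $\sigma$; here an edge $e\in E(G_2)$ is a \emph{conflict} (with respect to $\sigma$) if $\sigma(e)\in E(G_1)$. By linearity the expected number of conflicts is $p\,|E(G_2)|\approx pq\binom{n}{k}$, which the hypothesis $pq\le\epsilon\log n/n^{k-1}$ bounds by $O(\epsilon n\log n)$. So $\sigma$ itself will typically not be a packing, but its set of conflicts $C\subseteq E(G_2)$ has size only of order $n\log n$, which is small enough that one can hope to repair them.

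I would first establish that, with high probability under the joint choice of $G_1$, $G_2$ and $\sigma$: (a) both hypergraphs are near-regular, so that every vertex lies in $(1+o(1))$ times its expected number of edges and the analogous statement holds for codegrees of all $(k-1)$-subsets; (b) $|C|=O(n\log n)$, via Chernoff bounds for $G_1,G_2$ combined with a McDiarmid-type concentration applied to $\sigma$; and (c) no vertex of $[n]$ lies in more than $O(\log n)$ conflicting edges. All of these follow from standard bounded-differences or binomial concentration inequalities. Together they say that $C$ is sparse, well spread and ``lightly loaded'' on vertices.

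The heart of the argument is then to modify $\sigma$ on a carefully chosen small set of positions so that every conflict is destroyed and no new conflict is created. The simplest attempt is a transposition repair: for each $e\in C$, find $u\in e$ and $w\in[n]\setminus e$ such that swapping $\sigma$'s values at $u$ and $w$ (i) destroys the conflict at $e$ and (ii) creates no new conflict at $u$ or $w$. Pseudorandomness of $G_1$ provides many $w$ satisfying (i); property (c) combined with the $\log n$ slack in the hypothesis ensures that (for $\epsilon$ chosen small enough) a substantial number of these $w$ also satisfy (ii). An auxiliary bipartite graph between conflicts and candidate repair moves, together with Hall's theorem or a random matching argument, should then extract a mutually compatible family of repairs that can be applied simultaneously.

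The main obstacle is precisely this last consistency step. Because $|C|=\Theta(n\log n)$ while only $n$ swap partners are available, no partner can be reserved exclusively for one conflict, and a naive family of disjoint transpositions will not suffice. One must instead arrange that a single carefully chosen global modification (for instance a product of overlapping transpositions, or longer augmenting cycles) resolves many conflicts simultaneously without spawning new ones, leveraging the spread-out structure from (a)--(c). The choice of $\epsilon$ small enough is essential here: a larger constant would already violate (b) or (c) and leave the repair phase with no slack to close.
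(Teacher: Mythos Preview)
Your proposal is not a proof: it is a plan whose central step you yourself flag as unresolved. The obstacle you identify is real and fatal to the transposition-repair scheme as stated. With $pq$ at the top of the allowed range you have $|C|=\Theta(\epsilon n\log n)$ conflicts, while any bijection on $[n]$ differs from $\sigma$ in at most $n$ positions; since each position change can destroy at most the $O(\log n)$ conflicts through that vertex (by your property~(c)), a modification touching $o(n)$ positions cannot clear $\Theta(n\log n)$ conflicts, and a modification touching $\Theta(n)$ positions is essentially a fresh random bijection, for which the same conflict count recurs. Invoking ``overlapping transpositions or longer augmenting cycles'' does not change this arithmetic, and making $\epsilon$ small only shrinks the constant in front of $n\log n$, not the order of growth. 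So the repair phase, as described, cannot close.

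The paper avoids this difficulty by never accumulating a global conflict set at all. It partitions $V$ and $W$ into blocks $V_i$, $W_i$ of polynomial size $n^\eta$ and builds the bijection block by block. At stage $i$ the ``type~1'' neighbourhoods of vertices in $V_i$ and $W_i$ (into the already-matched part) form two set sequences to which a bipartite disjointness lemma (Lemma~\ref{bipa}, proved via Hall's theorem and Lemma~\ref{disjoint}) applies, yielding a matching with \emph{zero} type~1 collisions; the rare type~2 collision (at most one per stage, with high probability) is then fixed by a single local swap, and type~$\ge 3$ collisions are globally negligible. The $\log n$ slack is spent inside Lemma~\ref{disjoint}, where it guarantees that a random set avoids some member of a sparse family, rather than in any repair budget. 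If you want to salvage a random-bijection-plus-repair approach you would need a genuinely new mechanism---for instance an iterative resampling in the style of the Moser--Tardos algorithm---and a proof that it terminates; nothing in your outline supplies this.
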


Note that if $pq=\epsilon\log n /n^{k-1}$
then with high probability $G_1$ and $G_2$ satisfy 
$\Delta(G_1)\Delta(G_2)=\Theta(n^{k-1}\log n)$.

The bound on $pq$ in Theorem \ref{upfront} is easily seen to be sharp up to the constant.  Indeed, if
$G_1\in\mathcal G(n,k,p)$ and
$G_2\in\mathcal G(n,k,q)$ then the probability that $G_1$ and $G_2$ can be packed is at most
the expected number of packings
$$n!(1-pq)^{\binom nk}\le \exp(n\log n - (1+o(1))pqn^k/k!)
$$
which is $o(1)$ if $pq\ge\alpha\log n/n^{k-1}$ for any constant $\alpha>k!$.  In particular, if we take $p=q$, then 
combining this bound with Theorem \ref{upfront} shows
that the threshold density for two random $k$-uniform hypergraphs to be unpackable is $\Theta(\sqrt{\log n/n^{k-1}})$.

In the case of graphs, we will in fact prove a much stronger result: it turns out that we can take just {\em one} of the two graphs to be random.
Indeed, we prove the following.

\begin{theorem}\label{extend}
For all $\gamma, K>0$ and $\delta\in(0,1)$ there exists $\epsilon>0$ such that the following holds.  Let $p=p(n)$ and $q=q(n)$
be positive reals such that
  \begin{itemize}
   \item $p\le 1-\delta$
   \item $q\le n^{-\gamma}$
   \item $pqn\le\epsilon\log n$.
  \end{itemize}
Let $G_1$ be a graph of order $n$ with maximal degree at most $qn$ and let $G_2\in\mathcal G(n,p)$.  Then with failure probability
$O(n^{-K})$ there is a packing of $G_1$ and $G_2$.
\end{theorem}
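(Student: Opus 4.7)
The plan is to construct the bijection $\pi\colon V(G_1)\to V(G_2)$ by a random greedy embedding with a final absorption step, after conditioning on pseudo-random properties of $G_2$. I would first establish, via Chernoff bounds and a polynomial union bound, that with failure probability $O(n^{-K})$, $G_2$ is \emph{typical}: $\Delta(G_2)\le(1+o(1))pn$; for every vertex $v$ and every subset $A\subseteq V(G_2)$ of size at least $n^{1-\gamma/2}$, $|N_{G_2}(v)\cap A|=(1+o(1))p|A|$; and for every small set $B\subseteq V(G_2)$, $|V\setminus N_{G_2}(B)|=(1+o(1))(1-p)^{|B|}n$. Henceforth fix such a typical $G_2$.

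Reserve an absorber set $R\subseteq V(G_2)$ together with a set $W\subseteq V(G_1)$ of matching size $s$, balanced between the two phases. In the greedy phase, embed $V(G_1)\setminus W$ into $V(G_2)\setminus R$: process the vertices in some order (randomly, or arranged to control back-degrees), and at each step assign $\pi(v_i)$ uniformly at random from the \emph{valid set} $S_i$ of unused positions in $V(G_2)\setminus R$ not $G_2$-adjacent to any previously-embedded neighbor's image. Pseudo-randomness of $G_2$ gives $|S_i|\approx(n-s-i)(1-p)^{d_i}$, where $d_i$ is the back-degree of $v_i$, and concentration (Azuma--Hoeffding or McDiarmid, applied to the sequence of random greedy choices and exploiting the pseudorandom control of $G_2$-neighborhoods) yields per-step failure $n^{-K-1}$, which sums to $O(n^{-K})$ over the $n-s$ steps. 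In the absorption phase, extend $\pi$ by a bijection $W\to R$, obtained as a perfect matching in the bipartite compatibility graph via Hall's theorem; Hall's condition is verified using pseudo-randomness of $G_2$ restricted to $R$ together with the degree bounds from $G_1$.

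The main obstacle is sustaining the greedy phase with the required $O(n^{-K})$ failure rate: the condition $pqn\le\epsilon\log n$ lies a $\log n$ factor beyond the deterministic Sauer--Spencer threshold $pqn<1/2$, so this factor must be extracted by genuine use of randomness. Both the pseudo-randomness of $G_2$ (which makes $|S_i|$ predictable via the $(1-p)^{d_i}$ estimate) and the independence of the greedy random choices (which smooths out fluctuations through concentration) are essential. A careful choice of the absorber parameter $s$ and the ordering on $V(G_1)$ is also delicate: $s$ must be small enough that $|S_i|>0$ throughout the greedy phase, yet the structure of $W$ and the pseudo-randomness of $G_2[R]$ must suffice for Hall's condition, which is nontrivial since for $pqn\gg 1$ individual lists $L_w\subseteq R$ may be short. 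The conditions $q\le n^{-\gamma}$ and $pqn\le\epsilon\log n$, for sufficiently small $\epsilon=\epsilon(\gamma,K,\delta)$, supply the quantitative bounds that make both phases succeed simultaneously.
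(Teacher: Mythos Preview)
Your plan diverges substantially from the paper's proof and contains a concrete gap. The paper does \emph{not} condition on global pseudo-randomness of $G_2$ and then run a vertex-by-vertex greedy process with an absorber. Instead it partitions $V(G_1)$ into $\Theta(n^{1-\eta})$ classes $V_i$ of size $\Theta(n^\eta)$ (with $\eta=\gamma/2$), using a random colouring followed by Hajnal--Szemer\'edi so that each $V_i$ is independent in $G_1$ and no vertex has more than a constant number of $G_1$-neighbours in any class. It then embeds the classes one at a time, and at step $i$ reveals \emph{only} the edges of $G_2$ between $W_i$ and the already-used set $T_i$. The heart of the argument is a bipartite matching lemma (Lemma~\ref{bipa}): given the deterministic set sequence $\mathcal A$ of $G_1$-neighbourhoods of $V_i$ inside $S_i$ and the random set sequence $\mathcal B$ of $G_2$-neighbourhoods of $W_i$ inside $T_i$, one finds a perfect matching of disjoint pairs via Hall's theorem, with the degree estimates coming from a separate lemma (Lemma~\ref{disjoint}) about random sets avoiding many members of a sparse set system.

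The specific gap in your plan is the pseudo-randomness hypothesis ``for every vertex $v$ and every subset $A$ of size at least $n^{1-\gamma/2}$, $|N_{G_2}(v)\cap A|=(1+o(1))p|A|$''. There are exponentially many such sets $A$, while the Chernoff deviation probability for a single pair $(v,A)$ is only $\exp(-\Theta(p|A|))$; a union bound cannot give failure probability $O(n^{-K})$ unless $|A|$ is a positive fraction of $n$. Once this fails, the deterministic analysis of the greedy phase on a fixed ``typical'' $G_2$ collapses: the valid sets $S_i$ depend on the random history, and you have no a~priori control of $|N_{G_2}(v)\cap S_i|$. The paper sidesteps this entirely by never freezing $G_2$: at each stage it uses fresh, genuinely binomial edges, so the needed estimates are direct. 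Note also that your absorption step, when made precise, is exactly the disjoint-pair matching problem that the paper's Lemma~\ref{bipa} solves; so even a repaired version of your scheme would likely have to prove that lemma, which is where the real work lies.
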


The rest of the paper is organized as follows.  In Section \ref{graphs} we prove Theorem \ref{extend}, 
and in Section \ref{hypergraphs} we prove the extension to hypergraphs.  
We conclude in Section \ref{conclusion} with some open problems.

\section{Packing random graphs}\label{graphs}

The aim of this section is to prove Theorem \ref{extend}.  We begin by noting a couple of standard facts.

We will use the following Chernoff-type inequalities.  Let $X$ be a sum of
Bernoulli random variables, and let $\mu=\E X$.  Then for $t>0$, we have
\begin{equation}\label{C1}
 \P[X\le\E X-t]\le\exp(-t^2/2\mu)
\end{equation}
and 
\begin{equation}\label{C2}
  \P[X\ge \E X+t]\le\exp(-t^2/(2\mu+2t/3))
\end{equation}
(see, e.g., \cite[Theorems 2.1 and 2.8]{JLR} or \cite[Chapter 2]{BLM}). 
Inequality \eqref{C2} is often called Bernstein's inequality.

It will also be useful to note a simple (and standard) fact about the
binomial distribution, see e.g., \cite[Corollary 2.4]{JLR}.

\begin{proposition}\label{bbound}
For every $K>0$ there is $\delta>0$ such that 
if $x>0$ and $X\sim \Bi(n,p)$ is a binomial random variable  with 
$np \le \delta x$ then $\P[X\ge x] \le e^{-Kx}$.
\end{proposition}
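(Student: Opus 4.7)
The plan is to invoke the multiplicative Chernoff bound on the upper tail of the binomial. For $X \sim \Bi(n,p)$ with mean $\mu = np$ and any $x \ge \mu$, the standard exponential-moment argument yields
$$\P[X \ge x] \le \left(\frac{e\mu}{x}\right)^{x}.$$
Indeed, for any $\lambda > 0$ one has $\P[X \ge x] \le e^{-\lambda x}\E e^{\lambda X} = e^{-\lambda x}(1 + p(e^{\lambda}-1))^{n} \le \exp\bigl(-\lambda x + \mu(e^{\lambda}-1)\bigr)$, and taking $e^{\lambda} = x/\mu$ gives the claimed estimate.

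Given this bound, the proposition is essentially immediate. I would choose $\delta > 0$ small enough that $e\delta \le e^{-K}$, for instance $\delta = e^{-K-1}$. Under the hypothesis $np \le \delta x$ we then have $x \ge \mu/\delta \ge \mu$, so the Chernoff bound applies, and
$$\P[X \ge x] \le \left(\frac{e\mu}{x}\right)^{x} \le (e\delta)^{x} \le e^{-Kx},$$
as required.

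There is essentially no obstacle here, only a choice to be made: I would \emph{not} try to deduce the proposition from the additive Bernstein inequality \eqref{C2}. Writing $t = x - \mu \ge (1-\delta)x$, Bernstein gives an exponent $-t^{2}/(2\mu + 2t/3)$ that is bounded above by $-x \cdot (1-\delta)^{2}/(2\delta + 2/3)$, i.e.\ a linear function of $x$ whose coefficient stays bounded (by roughly $3/2$) no matter how small $\delta$ is chosen. By contrast, the virtue of the multiplicative form $(e\mu/x)^{x}$ is exactly that the base can be made arbitrarily small by shrinking $\delta$, which lets us absorb any prescribed constant $K$ into the exponent.
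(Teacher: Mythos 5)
Your proposal is correct and follows essentially the same route as the paper: both rest on the multiplicative bound $\P[X\ge x]\le (enp/x)^x$ and the identical choice $\delta=e^{-K-1}$, the only difference being that the paper obtains this bound combinatorially from $\P[X\ge x]\le\binom{n}{x}p^x$ (after rounding $x$ to an integer) while you derive it via the moment generating function, which incidentally makes the integrality assumption unnecessary. Your remark about why Bernstein's inequality \eqref{C2} would not suffice is accurate as well.
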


\begin{proof}
This is standard; we include a proof for completeness.
We have,
assuming as we may that $x$ is an integer, 
\begin{align*}
\P[X\ge x] 
&\le \binom {n}{x}p^{x}
\le \left(\frac{e n  p}{x}\right)^{x}
\le (e \delta)^{x}
\end{align*}
where we have used the standard bound $\binom nk\le(en/k)^k$ in the second line.
The result follows by choosing $\delta=e^{-K-1}$.
\end{proof}

Our first lemma is the following, which shows that, if $\mathcal A$ is a large, sparse set system then a random set (of suitable size)  is quite likely to be disjoint from some member of $\mathcal A$.

\begin{lemma}\label{disjoint}
For all $\delta,\gamma\in(0,1)$ there is $\epsilon>0$ such that the following holds for all sufficiently large $n$.  
Let $d=n^{1-\gamma}$, let $X$ be any set, and let $\mathcal A$ be a set sequence in $\mathcal P(X)$ such that:
\begin{itemize}
\item $|\mathcal A|\ge n$
\item every element of $X$ belongs to at most $d$ sets from $\mathcal A$ 
\item all sets in $\mathcal A$ have size at most $\epsilon \log n$.
\end{itemize}
Let $B\subset X$ be a random set where each element of $X$ independently belongs to $B$
with probability $1-\delta$.  Then 
$B$ is disjoint from at least $n^{1-\gamma/4}$ sets of $\mathcal A$, 
with failure probability $O(\exp(-n^{\gamma/3}))$.
\end{lemma}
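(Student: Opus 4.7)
The plan is to let $Y=|\{A\in\mathcal A:A\cap B=\emptyset\}|$, lower bound $\E Y$ by a first-moment computation, and then establish concentration via McDiarmid's bounded differences inequality. For the expectation, each $A\in\mathcal A$ is disjoint from $B$ with probability $\delta^{|A|}\ge\delta^{\epsilon\log n}=n^{-c\epsilon}$, where $c:=-\log\delta>0$; summing over $\mathcal A$ yields $\E Y\ge|\mathcal A|\,n^{-c\epsilon}\ge n^{1-c\epsilon}$. I will choose $\epsilon$ (depending on $\delta$ and $\gamma$) small enough that $c\epsilon\le\gamma/8$, so that for large $n$ we have $\E Y\ge n^{1-\gamma/8}\ge 2n^{1-\gamma/4}$, reducing the task to showing $\P[Y\le\E Y/2]=O(\exp(-n^{\gamma/3}))$.

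For the concentration step, view $Y$ as a function of the independent Bernoullis $(\mathbf 1[x\in B])_{x\in X'}$, where $X':=\bigcup\mathcal A$; flipping the status of a single $x$ changes $Y$ by at most $d_x:=|\{A\in\mathcal A:x\in A\}|\le d=n^{1-\gamma}$. The remaining two hypotheses combine cleanly to bound the Lipschitz sum:
$$\sum_{x\in X'}d_x^2\le d\sum_{x\in X'}d_x=d\sum_{A\in\mathcal A}|A|\le d\,|\mathcal A|\,\epsilon\log n,$$
invoking the degree bound once and the size bound once. McDiarmid's inequality then gives
$$\P\bigl[Y\le\E Y/2\bigr]\le\exp\!\left(-\frac{(\E Y)^2}{2d\,|\mathcal A|\,\epsilon\log n}\right)\le\exp\!\left(-\frac{n^{\gamma-2c\epsilon}}{2\epsilon\log n}\right),$$
after substituting $\E Y\ge|\mathcal A|\,n^{-c\epsilon}$, $|\mathcal A|\ge n$, and $d=n^{1-\gamma}$. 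With $c\epsilon\le\gamma/8$ the exponent is at least $n^{3\gamma/4}/(2\epsilon\log n)$, which exceeds $n^{\gamma/3}$ for $n$ large.

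The main obstacle is the concentration step: because the events $\{A\cap B=\emptyset\}$ are positively correlated, a Chebyshev/second-moment argument based on the mean alone cannot deliver an exponentially small failure probability. McDiarmid handles this cleanly since the coordinatewise Lipschitz constant of $Y$ is simply $d_x$, and the lemma's two sparsity hypotheses telescope into the manageable bound $\sum d_x^2\le d\,|\mathcal A|\,\epsilon\log n$. The role of $\epsilon$ is delicate but benign: a smaller $\epsilon$ boosts each $\P[A\cap B=\emptyset]$ through the polynomial factor $n^{-c\epsilon}$, while $\epsilon$ enters the Lipschitz estimate only logarithmically, so the first-moment gain easily dominates and any $\epsilon\le\gamma/(8\log(1/\delta))$ works.
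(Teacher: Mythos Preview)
Your argument is correct and is essentially the ``element exposure martingale plus Hoeffding--Azuma'' alternative that the paper itself alludes to in the first paragraph of its proof (McDiarmid's bounded differences inequality being exactly Azuma applied to the Doob martingale). The paper's own proof takes a structurally different route: it first reduces to $|\mathcal A|=n$, then applies the Hajnal--Szemer\'edi theorem to the intersection graph of $\mathcal A$ to split $\mathcal A$ into $O(d\log n)$ classes of pairwise disjoint sets, each of size at least $n^{\gamma/2}$; within each class the events $\{A\cap B=\emptyset\}$ are genuinely independent, so a direct binomial tail bound applies, and a union bound over classes finishes. Your approach is more elementary in that it avoids Hajnal--Szemer\'edi entirely, and it handles arbitrary $|\mathcal A|\ge n$ without a preliminary reduction because $|\mathcal A|$ cancels favourably in the ratio $(\E Y)^2/\sum_x d_x^2$. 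The paper's approach, on the other hand, makes the source of concentration transparent by manufacturing independence rather than relying on a black-box Lipschitz inequality. Both routes exploit the same two sparsity hypotheses, but yours condenses them into the single estimate $\sum_x d_x^2\le d\sum_A|A|$, which is the key identity driving the whole computation.
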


\begin{proof}
This can be proved in more than one way (an alternative proof pointed out by a referee runs an element exposure martingale on X 
and then applies the Hoeffding-Azuma inequality).

We may assume that $|\mathcal A|=n$.  We choose a small $\epsilon>0$, and
assume that $n$ is large. 
We ignore below insignificant roundings to integers. 

We begin by partitioning $\mathcal A$ into sets of pairwise disjoint elements.  
Let $G$ be the intersection graph of $\mathcal A$: so the vertices of $G$ are the elements of $\mathcal A$, 
and $G$ has edges $AA'$ whenever $A\cap A'$ is nonempty.
Since every vertex belongs to at most $d$ sets from $\mathcal A$, and every set has size at most $\epsilon \log n$, each set in $\mathcal A$ meets at most $\epsilon d\log n$ other sets. 
Thus $G$ has maximal degree at most $\epsilon d\log n$.  It follows by a theorem of Hajnal and Szemer\'edi \cite{HS} that $G$ has a colouring with
at most $\epsilon d\log n +1$ colours in which the sizes of distinct colour classes differ by at most 1.   Thus we may partition $G$ into independent sets
(and so $\mathcal A$ into collections of pairwise disjoint sets) of
size at least $n/(\epsilon d\log n +1)\ge n^{\gamma/2}$.

Let $\mathcal A'$ be one of these collections of pairwise disjoint sets, and set $m=|\mathcal A'|\ge n^{\gamma/2}$.  
The random set $B$
is disjoint from each member of $\mathcal A'$ independently with probability at least $\delta^{\epsilon\log n}=n^{-\epsilon\log(1/\delta)}>n^{-0.01\gamma}$ 
provided we have chosen a sufficiently small $\epsilon$; 
it follows that the probability that $B$ is disjoint from fewer than
$m/n^{\gamma/4}$ sets in $\mathcal A'$ is at most 
\begin{align*}
\binom{m}{m/n^{\gamma/4}}(1-n^{-0.01\gamma})^{m-m/n^{\gamma/4}}
&\le\left(\frac{em}{m/n^{\gamma/4}}\right)^{m/n^{\gamma/4}} \exp(-n^{-0.01\gamma}m/2)\\
&<e^{m\log n/n^{\gamma/4}}e^{-n^{-0.01\gamma}m/2}\\
&<e^{-n^{-0.01\gamma}m/4},
\end{align*}
provided $n$ is sufficiently large.  There are $\epsilon d\log n +1=o(n)$ colour classes, so
with failure probability 
$o(ne^{-n^{-0.01\gamma} n^{\gamma/2}/4})=O(e^{-n^{\gamma/3}})$, 
$B$ is disjoint from at least a fraction $n^{-\gamma/4}$ of the sets in each colour class, and hence is disjoint from at least
$n^{1-\gamma/4}$ sets in $\mathcal A$.
\end{proof}

For positive integers $m,n$, and $p\in[0,1]$ we write $\mathcal S(n,m,p)$ for 
a random sequence $(S_i)_{i=1}^m$ of $m$ subsets of $[n]$,
where the subsets are independent and each set independently contains each element of $[n]$ with probability $p$.
Equivalently, we could consider a random $m \times n$ matrix with entries 0 and 1, where each element independently takes value 1 with probability $p$.   We shall refer to 
$S\in\mathcal S(n,m,p)$ as a {\em random set sequence}.

Given two random set sequences $\mathcal A\in\mathcal S(m,n,p)$ and $\mathcal A'\in\mathcal S(m,n,q)$, where $m\le n$,
it will be useful to pair up the sets from $\mathcal A$ and $\mathcal A'$ so that each pair is disjoint.  
For $A\in \mathcal A$ and $A'\in\mathcal A'$, the probability that $A$ and $A'$ are disjoint is $(1-pq)^n\le \exp(-npq)$, so if $pq>2\log n/n$ 
it is likely that we do not have any disjoint pairs at all.  However, if $pq<c\log n/n$, for small enough $c$, we will show that such a pairing is possible.
In fact we will prove a much stronger result: we can take just one of the set systems to be random, provided the other satisfies certain sparsity conditions.

\begin{lemma}\label{bipa}
For all $K>0$ and $\eta,\gamma,\delta\in(0,1)$ there is $\epsilon>0$ such that the following holds for all sufficiently large $n$.
Suppose that $p=p(n),q=q(n)\in [0,1]$ satisfy $0\le p<1-\delta$ and $pq<\epsilon \log n/n$.  Let $m\in[n^\eta,n]$ be an integer
and set $d=m^{1-\gamma}$, and  
suppose that $\mathcal A=(A_i)_{i=1}^m$ is a sequence of subsets of $[n]$ such that
\begin{itemize}
\item every $i\in[n]$ belongs to at most $d$ sets from $\mathcal A$
\item $\max_{A\in\mathcal A}|A|\le qn$.
\end{itemize}
Let $\mathcal B=(B_i)_{i=1}^m\in S(n,m,p)$
be a random set sequence, and
let $H$ be the bipartite graph with vertex classes $\mathcal A$ and $\mathcal B$, where we join $A_i$ to $B_j$ if 
$A_i\cap B_j=\emptyset$.  Then,
with failure probability $O(n^{-K})$, $H$ has minimal degree at least $m^{1-\gamma/4}$; furthermore,
$H$ has a perfect matching.
\end{lemma}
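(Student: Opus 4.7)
The plan is to establish the minimum-degree bound and the existence of a perfect matching separately, in both cases choosing $\epsilon$ small in terms of $K,\eta,\gamma,\delta$ so that all the estimates close. For a fixed $A_i\in\mathcal A$, the events $\{A_i\cap B_j=\emptyset\}_{j=1}^{m}$ are independent (because the $B_j$ are), and each has probability at least $(1-p)^{qn}\ge\exp(-pqn/\delta)\ge n^{-\epsilon/\delta}$, using $p\le1-\delta$ and $pqn\le\epsilon\log n$. Hence the expected degree of $A_i$ in $H$ is at least $mn^{-\epsilon/\delta}\ge 2m^{1-\gamma/4}$ provided $\epsilon/\delta<\eta\gamma/8$, and the lower-tail Chernoff bound \eqref{C1} together with a union bound over $i$ controls the $\mathcal A$-side minimum degree with failure probability $O(n^{-K})$.

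The $\mathcal B$-side minimum degree is more delicate because, for fixed $j$, the events $\{A_i\cap B_j=\emptyset\}_i$ are dependent. Here I imitate the proof of Lemma~\ref{disjoint}: apply Hajnal--Szemer\'edi to the intersection graph of $\mathcal A$ (whose maximum degree is at most $qn\cdot d$) to partition $\mathcal A$ into colour classes of pairwise disjoint $A_i$'s, then exploit the fact that inside each colour class the relevant events are independent Bernoullis of probability at least $n^{-\epsilon/\delta}$. Chernoff applied inside each class and summed over classes gives the required $\ge m^{1-\gamma/4}$ lower bound, and a union bound over the $m$ choices of $j$ absorbs the failure probability.

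For the perfect matching I will use the following deterministic criterion: a balanced bipartite graph on $\mathcal A\cup\mathcal B$ of order $2m$ has a perfect matching provided (i)~its minimum degree on each side is at least $k$ and (ii)~every pair of subsets $S\subseteq\mathcal A$, $T\subseteq\mathcal B$ with $|S|,|T|\ge k$ spans at least one edge. (Proof: if Hall fails at some $S$, the minimum-degree conditions on both sides rule out $|S|\le k$ and $|S|>m-k$, while in the remaining range $T=\mathcal B\setminus N(S)$ has $|T|>k$ and no edge to $S$, contradicting~(ii).) I take $k=m^{1-\gamma/4}$; (i) is already in hand, and it remains to show that with probability $1-O(n^{-K})$ no $(S,T)$ of size $\ge k$ spans an independent set. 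For fixed $(S,T)$ of sizes $s,t\ge k$, independence of the $B_j$'s gives
\[
\P[\text{no edge between }S\text{ and }T]=\prod_{j\in T}\P\bigl[\forall i\in S:A_i\cap B_j\ne\emptyset\bigr]\le\exp\bigl(-t|S'|n^{-\epsilon/\delta}\bigr)
\]
for any pairwise disjoint subcollection $S'\subseteq S$. To make $|S'|$ large enough, I plan to use the volume bound $\sum_{i\in S}|A_i|\le nd=nm^{1-\gamma}$, which forces at least half of $S$ to consist of $A_i$'s of size at most $2nd/s$; a greedy selection inside this half inside the intersection graph of $\mathcal A$ produces an $S'$ large enough that, combined with the product over $j\in T$, the estimate beats the entropy $\binom{m}{s}\binom{m}{t}$ after choosing $\epsilon$ small.

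The main obstacle is the last step: the union bound has to succeed uniformly over $p\in[0,1-\delta]$ and $q\in[0,1]$ subject only to $pq\le\epsilon\log n/n$ and $m\ge n^\eta$. When $q$ is close to $1$ the sets $A_i$ can be as large as $n$ and the intersection graph of $\mathcal A$ can be essentially complete, so the naive bound $\Delta\le qn\cdot d$ on its maximum degree is too weak to yield a useful pairwise disjoint subcollection; exploiting the averaging constraint $\sum|A_i|\le nd$ inside every sufficiently large $S$ to extract enough small, pairwise disjoint sets to drive the per-$j$ hitting probability safely below $1$ is the main combinatorial hurdle.
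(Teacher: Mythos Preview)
Your $\mathcal A$-side degree argument is correct, but the $\mathcal B$-side degree and the Hall verification both have the gap you yourself flag, and the volume/greedy idea does not close it. For the $\mathcal B$-side, the bound $\Delta\le qn\cdot d$ on the intersection graph of $\mathcal A$ can exceed $m$ (take $q=1$, which the hypotheses allow), so Hajnal--Szemer\'edi yields singleton colour classes and there is no per-class Chernoff concentration at all. For Hall, the volume bound $\sum_i|A_i|\le nd$ does not help in the uniform case where every $|A_i|=nd/m=nm^{-\gamma}$: then no set in $S$ is smaller than the average, and your greedy selection gives only $|S'|=O(s^{2}/(nd^{2}))$, which at $s=m^{1-\gamma/4}$ is $O(m^{3\gamma/2}/n)$ --- below $1$ whenever $\eta\gamma<2/3$. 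In that regime the factor $e^{-t|S'|n^{-\epsilon/\delta}}$ cannot possibly beat the entropy $\binom ms\binom mt$, so the union bound fails.

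The idea that is missing, and which the paper uses, is a two-round exposure of $\mathcal B$: first generate $\mathcal B'=(B'_j)\in S(n,m,(1+\delta)p)$, then obtain each $B_j$ from $B'_j$ by independent deletions with probability $\delta/(1+\delta)$. Since $|A_i\cap B'_j|$ is stochastically dominated by $\Bi(qn,(1+\delta)p)$, whose mean is $O(pqn)=O(\epsilon\log n)$, Proposition~\ref{bbound} gives $|A_i\cap B'_j|\le\epsilon'\log m$ for all $i,j$ with probability $1-O(n^{-K})$. After conditioning on $\mathcal B'$ one works, for each fixed $j$, with the restricted family $(A_i\cap B'_j)_{i}$ instead of the $A_i$ themselves; these restricted sets have size $\le\epsilon'\log m$, so their intersection graph has maximum degree $O(d\log m)$ and Lemma~\ref{disjoint} applies directly. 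This simultaneously delivers the $\mathcal B$-side degree bound and, for the Hall step, the per-$j$ estimate $\P[B_j\text{ hits every }A_i,\,i\in T]\le\exp(-|T|^{\gamma/12})$, uniformly in $q$. The two-round exposure is what converts the intractable large-$q$ case into the small-set regime for which your Hajnal--Szemer\'edi idea (packaged as Lemma~\ref{disjoint}) actually works.
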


\begin{proof}
Let $\epsilon, \epsilon'>0$ be fixed, small quantities (with $\epsilon\ll \epsilon'$) that we shall choose later.
We generate
$\mathcal B$ in two steps:~we first choose a random set sequence
$\mathcal B'=(B_i')_{i=1}^m\in S(n,m,(1+\delta)p)$, and then obtain $\mathcal B$ from $\mathcal B'$ by deleting each
element from each set $B_i'$ independently with probability $\delta'=\delta/(1+\delta)$.

Note first that for any $i,j$, the distribution of the intersection $|A_i\cap B_j'|$ is stochastically dominated by a binomial
$\Bi(nq,p(1+\delta))$.  
So for fixed $\epsilon'>0$,  it follows from Proposition \ref{bbound} that we have 
$|A_i\cap B_j'|<\epsilon'\log m$ for all $i$ and $j$, with failure probability $O(n^{-K})$, provided $\epsilon$ is small enough
in terms of $\epsilon'$.
We may therefore assume from now on that this event occurs, and condition on the choice of $\mathcal B'$
(so $\mathcal B'$ is fixed and $\mathcal B$ is still random).

Now consider the bipartite graph $H$.
We need to prove that $H$ has a perfect matching.
We shall apply Hall's condition to $\mathcal B$, so it is enough to show that for every subset $S\subset \mathcal B$
we have $|\Gamma_H(S)|\ge|S|$.

Consider $B_i'\in\mathcal B'$, and let 
$\mathcal A'_i=(A_j\cap B'_i)_{j=1}^m$ be the restriction of $\mathcal A$ to $B'_i$.   Then 
every vertex belongs to at most $d$ sets from $\mathcal A'_i$
and $\max_j|A_j\cap B_i'|<\epsilon'\log m$, so provided $\epsilon'$ is sufficiently small
we can apply  Lemma \ref{disjoint} to deduce that
with failure probability $O(e^{-m^{\gamma/3}})$ the set $B_i$ is disjoint from
at least  $m^{1-\gamma/4}$ sets from $\mathcal A'_i$.  
This occurs independently for each $i$ (recall that we are conditioning on $\mathcal B'$), so
with failure probability
$O(m e^{-m^{\gamma/3}})=O(n^{-K})$ every vertex in $\mathcal B$ has degree at least $m^{1-\gamma/4}$ in $H$,
and so Hall's condition holds for every $S\subset \mathcal B$ with $|S|<m^{1-\gamma/4}$.

Now consider an element $A_i\in\mathcal A$.  Each $B_j'$ meets $A_i$ in at most $\epsilon'\log m$ vertices, and 
so each $B_j$ independently is disjoint from $A_i$ with probability at least 
$(\delta')^{\epsilon'\log m}>m^{-\gamma/6}$, provided $\epsilon'$ is sufficiently small.
The number of $B_j$ disjoint from $A_i$ is thus a binomial with expectation at least $m^{1-\gamma/6}$ and so,
by \eqref{C1}, is at least $m^{1-\gamma/6}/2>m^{1-\gamma/4}$,
with failure probability $O(e^{-m^{1-\gamma/6}/8})$.
So with failure probability 
$O(me^{-m^{1-\gamma/6}/8})=O(n^{-K})$
every vertex in $\mathcal A$ has degree at least $m^{1-\gamma/4}$ in $H$,
and so Hall's condition holds for every $S\subset \mathcal B$ with $|S|>m-m^{1-\gamma/4}$.

We have now shown that $H$ has minimal degree at least $m^{1-\gamma/4}$.  All that remains is
to verify Hall's condition for 
sets $S\subset\mathcal B$ of size between $m^{1-\gamma/4}$ and $m-m^{1-\gamma/4}$.
Let $t\in[m^{1-\gamma/4},m-m^{1-\gamma/4}]$: we shall bound the probability that there is any subset of $\mathcal B$ of size $t$ 
with $t$ or fewer neighbours in $\mathcal A$.  Suppose that $S\subset \mathcal B$ has size $t$ and
$T\subset \mathcal A$ has size $m-t$.  For any fixed $B_i'\in S$, 
the set sequence $\mathcal A'=(A\cap B_i')_{A\in T}$ has 
$\max_{A'\in\mathcal A'}|A'|\le\epsilon'\log m$ and
every vertex belongs to at most $d$ sets from $\mathcal A'$,
where $d=m^{1-\gamma}\le|\mathcal A'|^{1-\gamma/4}$.
So by Lemma \ref{disjoint}, the probability that
$B_i$ intersects every set in $T$ is at most $\exp(-(m-t)^{\gamma/12})$.  Thus the probability that (in the graph $H$) 
$S$ has no neighbours in $T$ is at most 
$\exp(-t\cdot(m-t)^{\gamma/12})$.  Since there are at most $n^{2t}=\exp(2t\log n)$ choices for the pair
$(S,T)$, we deduce that the probability that there is any set $S$ of size
$t$ with at most $t$ neighbours is bounded by
$\exp(2t\log n)\exp(-t\cdot(m-t)^{\gamma/12})=O(n^{-(K+1)})$, uniformly in
$t$.   
Summing over $t$, we see that Hall's
condition holds with failure probability $O(n^{-K})$.

We conclude by noting that we can choose first $\epsilon'$ and then $\epsilon$ sufficiently small for the estimates above to hold.
\end{proof}

We are now ready to prove  Theorem \ref{extend}.

\begin{proof}[Proof of Theorem \ref{extend}.]
Let $\eta=\gamma/2$, $t=\lceil (K+2)/\eta\rceil$, and let
$G_1$ have vertex set $V$ and $G_2$ have vertex set $W$.  We begin by finding a partition of $V$ into sets
$V_1,V_2,\dots$ of size $\Theta(n^\eta)$ such that:
\begin{itemize}
 \item $V_i$ is an independent set in $G_1$ for every $i$,
 \item Every vertex in $V$ has fewer than 
$t$ neighbours in each set $V_j$.
\end{itemize}
Indeed, we first colour $V$ randomly with $n^{1-\eta}$ colours, giving each vertex 
a colour selected uniformly at random and independently.  It follows from \eqref{C1} and \eqref{C2} that, with failure probability $O(n^{-K})$,
every colour class has size $(1+o(1))n^{\eta}$.  Consider a vertex $v \in V$, say 
with degree $d$.  Then by assumption $d\le qn\le n^{1-\gamma}$. 
So the probability that $v$ has a set of $t$ neighbours,
all with the same colour, is at most 
$$\binom{d}{t}(1/n^{1-\eta})^{t-1}\le d^t n^{\eta t-t+1} 
\le n^{1+t\eta-t\gamma}=n^{1-t\eta}=O(n^{-(K+1)}).$$
It follows that, with failure probability
$O(n^{-K})$, no vertex has $t$ neighbours of the same colour.  
Each colour class now induces a subgraph with maximum degree less than $t$, so we can
apply the Hajnal-Szemer\'edi Theorem \cite{HS} to each class, splitting it into $O(t)$ independent sets of (almost) the same size.
The vertex classes are now independent, have size $\Theta(n^\eta)$,
and no vertex has $t$ neighbours in any other class.

Reordering if necessary, we may assume that $|V_1|\ge|V_2|\ge\cdots$.
Now let $W=W_1\cup W_2\cup\cdots$ be an arbitrary partition of $W$ (chosen before revealing $G_2$) 
such that $|W_i|=|V_i|$ for every $i$.
We construct a bijection between $V$ and $W$ 
that defines a packing 
(i.e., does not map any edge of $G_1$ to an edge of $G_2$)
by 
constructing suitable 
bijections between $V_i$ and $W_i$ for $i=1,2,\dots$.

For $i=1$, we choose an arbitrary bijection between $V_1$ and $W_1$.  
(Recall that $V_1$ is independent.) 
For $i>1$, we set 
$S_i=\bigcup_{j<i}V_j$ and $T_i=\bigcup_{j<i}W_j$, and suppose that we have found a bijection $\phi_i:S_i\to T_i$.  
The neighbourhoods of vertices in $V_i$ and $W_i$ define
set sequences $\mathcal A=(\Gamma(v)\cap S_i)_{v\in V_i}$ in $S_i$ and  
$\mathcal B=(\Gamma(v)\cap T_i)_{v\in W_i}$ in $T_i$,
and the bijection $\phi_i$ allows us to identify $S_i$ and $T_i$.
We now check that these two set sequences satisfy the conditions of Lemma \ref{bipa}, which we will 
then apply to obtain a bijection between $V_i$ and $W_i$.
Let 
\begin{align*}
\widetilde n&=|S_i|=|T_i|=\Theta(in^\eta),\\ 
\widetilde m&=|V_i|=|W_i|=\Theta(n^\eta),
\end{align*}  
and note that $|\mathcal A|=|\mathcal B|=\widetilde m$
and $\widetilde m\in[{\widetilde n}^{\eta/2},\widetilde n]$. 
By construction of the partition $(V_j)_{j\ge1}$, no vertex belongs to $t$ sets from $\mathcal A$, 
as each vertex in $S_i$ has fewer than $t$ neighbours in $V_i$.
Let $\widetilde q=\max_{A\in \mathcal A}|A|/\widetilde n$.
Each set in $\mathcal A$ has size at most
$qn$ and so  $\widetilde  q \le qn/\widetilde n = O(qn^{1-\eta}/i)$.  
The set sequence $\mathcal B$ is random
with $\mathcal B\in S(\widetilde n,\widetilde m,p)$, 
and depends only on the edges between $W_i$ and $T_i$. 
Furthermore,
$$p\widetilde q\widetilde n
\le p \cdot (qn/\widetilde n)\cdot \widetilde n
=pqn\le \epsilon\log n=O(\epsilon\log \widetilde n).$$
We can therefore
apply Lemma \ref{bipa}, to deduce that if $\epsilon$ is sufficiently small then
with failure probability $O(n^{-(K+1)})$ there is a bijection 
between the two set sequences for which the corresponding pairs are disjoint; 
this corresponds to a bijection between $V_i$ and $W_i$ so that there are no common edges 
in the bipartite graphs between $(V_i,S_i)$ and $(W_i,T_i)$ where $S_i$ and $T_i$
are identified by $\phi_i$.  Extending $\phi_i$ with this bijection,
we obtain a bijection $\phi_{i+1}:S_{i+1}\to T_{i+1}$.

It follows that, with failure probability $O(n^{-K})$, we succeed at every step and construct the desired bijection.
\end{proof}

Finally in this section, we note that Theorem \ref{extend} can be used to pack several random graphs.

\begin{corollary}\label{multiples}
Let $\gamma, K>0$, let $\delta\in(0,1)$, and let $t$ be a positive integer.  Then there exists $\epsilon>0$ such that the following holds.  Let $p_0(n),\dots,p_t(n)$ satisfy
  \begin{itemize}
   \item $\max_i p_i\le 1-\delta$
   \item $p_0\le n^{-\gamma}$
   \item $\max_{i<j}p_ip_jn\le\epsilon\log n$.
  \end{itemize}
Let $G_0$ be a graph of order $n$ with maximal degree at most $p_0n$ and, for $i=2,\dots,t$, let $G_i\in\mathcal G(n,p_i)$.  Then with failure probability
$O(n^{-K})$ there is a packing of $G_0,\dots,G_t$.
\end{corollary}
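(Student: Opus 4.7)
The plan is induction on $t$, with Theorem \ref{extend} as both the base case ($t=1$) and the key tool in the inductive step. For the inductive step I relabel $G_1,\ldots,G_t$ so that $p_1 = \max_{i\ge 1}p_i$: dropping $G_1$ leaves the hypotheses of the corollary intact for $\{G_0, G_2,\ldots,G_t\}$, so the inductive hypothesis (invoked with $K+1$ in place of $K$, to leave slack for the final step) yields a packing of these $t$ graphs into a single graph $H$ on $[n]$ with failure probability $O(n^{-(K+1)})$. It then remains to pack $G_1$ onto $H$ by a single application of Theorem \ref{extend}, treating $H$ as the deterministic graph and $G_1\in\mathcal G(n,p_1)$ as the random one.

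To invoke Theorem \ref{extend} for the final step I need to check that $\Delta(H)\le \bar q n$ with $\bar q\le n^{-\gamma'}$ for some $\gamma'>0$, and that $p_1\Delta(H)\le \epsilon^{\ast}\log n$, where $\epsilon^{\ast}=\epsilon^{\ast}(\gamma',K+1,\delta)$ is the constant supplied by Theorem \ref{extend}. Standard Chernoff bounds, union-bounded over the $n$ vertices and the $t$ random graphs, give $\Delta(G_j)\le 2p_jn+C_K\log n$ with failure probability $O(n^{-(K+1)})$. The pairwise bound $p_1p_jn\le \epsilon\log n$ combined with $p_1\ge p_j$ forces $p_j\le \sqrt{\epsilon\log n/n}$ for all $j\ge 2$, so $\Delta(H)\le p_0n+(t-1)\bigl(2\sqrt{\epsilon n\log n}+C_K\log n\bigr)=O(n^{1-\gamma'})$ for $\gamma':=\tfrac12\min(\gamma,\tfrac12)$, which handles the first requirement once $n$ is large.

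The hard part is the product bound $p_1\Delta(H)\le \epsilon^{\ast}\log n$. The naive estimate contains a term $(t-1)C_Kp_1\log n$, which is $\Theta(\log n)$ whenever $p_1$ is not small, and so need not fit inside the fixed $\epsilon^{\ast}\log n$. I resolve this by a case split on $p_1$. If $p_1\le c_1:=\epsilon^{\ast}/(4tC_K)$, then $(t-1)C_Kp_1\log n\le\epsilon^{\ast}\log n/4$ and shrinking $\epsilon$ controls the remaining terms $p_1p_0n+2\sum p_1p_jn\le (2t-1)\epsilon\log n$. If $p_1>c_1$, the pairwise constraint forces $p_j\le \epsilon\log n/(c_1n)=o(1)$ for all $j\ge 2$; optimizing $k$ in the sharper tail $\P[\Bi(n,p)\ge k]\le (enp/k)^k$ at $k\asymp \log n/\log(c_1/(e\epsilon))$ upgrades the Chernoff bound to $\Delta(G_j)\le 2p_jn+O\bigl(\log n/\log(c_1/(e\epsilon))\bigr)$ w.h.p., and the denominator can be made arbitrarily large by taking $\epsilon$ a sufficiently small positive constant depending on $\gamma,K,t,\delta$. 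Theorem \ref{extend} then packs $G_1$ onto $H$ with failure probability $O(n^{-(K+1)})$, and a union bound with the inductive step yields the claimed $O(n^{-K})$. The main obstacle throughout is precisely the additive $\log n$ from the crude Chernoff tail: it cannot be eliminated uniformly in $\mu_j=p_jn$, but the case split exploits the fact that a large $p_1$ forces the other densities to be vanishingly small, so that the sharper tail replaces $\log n$ by $\log n/\log(1/\epsilon)$.
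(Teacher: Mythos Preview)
Your argument is correct and follows essentially the same strategy as the paper: order the random graphs by density, pack them one at a time via Theorem~\ref{extend} with the densest saved for last, and control the maximum degree of the accumulated graph by binomial tail bounds so that the product condition $p\,\Delta(H)\le \epsilon^{\ast}\log n$ is met at each step.

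The only notable difference is that your case split on the size of $p_1$ is more work than necessary. The paper avoids it by a single appeal to Proposition~\ref{bbound}: since $p_jp_1 n\le \epsilon\log n$ gives $np_j\le (\epsilon/p_1)\log n$, one may take the threshold $x=\bigl(\epsilon^{\ast}/(2tp_1)\bigr)\log n$ directly; then $np_j\le \delta' x$ holds once $\epsilon$ is small enough (depending on $\epsilon^{\ast},t$ and the $\delta'$ supplied by Proposition~\ref{bbound}), and since $x\ge(\epsilon^{\ast}/2t)\log n$ the failure probability $ne^{-K'x}$ is polynomially small for suitable $K'$. This yields $p_1\Delta(G_j)\le (\epsilon^{\ast}/2t)\log n$ uniformly, with no need to distinguish whether $p_1$ is bounded away from zero. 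Your Case~2 computation is in fact exactly this bound; the point is simply that it also covers Case~1.
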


\begin{proof}
We may assume that $p_1\le\dots\le p_t$.  
Thus, by the second and third conditions above, we have 
$\sum_{i=0}^{t-1}p_i =O(n^{-\min\{\gamma,1/3\}})$.
We first pack $G_0$ and $G_1$, then add in the remaining graphs one at a time, applying Theorem \ref{extend} at each stage.
Thus at the $i$th stage we have packed $G_0,\dots,G_i$ to obtain a graph $H_i$: it follows easily
from Proposition \ref{bbound} that 
with high probability the maximum degree condition of Theorem \ref{extend}
is satisfied by $H_i$ (with a slightly smaller $\gamma$).  Provided $\epsilon$
is sufficiently small, we get that
with failure probability $O(n^{-K})$ we can pack $H_i$ with $G_{i+1}$.
\end{proof}

\section{Packing hypergraphs}\label{hypergraphs}

In this section, we will prove Theorem \ref{upfront}. 

\begin{proof}[Proof of Theorem \ref{upfront}]
Note that the case $k=2$ follows immediately from Theorem \ref{extend}, so we can assume $k\ge 3$.
Let $\eta=1/5$, $t=15k$, and let
$\epsilon, \epsilon'>0$ be small constants and $K,K'$ large constants; 
we will choose $\epsilon,\epsilon'$ and $K,K'$ later. 
(In fact, we will first choose $\epsilon'$; $K'$ will be determined by $\epsilon'$;
we then choose $K$ and finally $\epsilon$.)
We may assume that $q\le p$, and so in particular 
$q=O(\sqrt{\log n/n^{k-1}})<n^{-1/2}$ (for large $n$).  
We may also assume that $q\ge \epsilon\log n /n^{k-1}$ , or increase to this value. 

Our argument will follow a similar strategy to Theorem \ref{extend}, but there are some additional complications.  It will be helpful to reveal the edges
of $G_1$ and $G_2$ in several steps.
This time we let $V$ be the vertex set of $G_2$ and $W$ the vertex set of
$G_1$.

We first generate a partition of $V$ into sets
$V_1,V_2,\dots$ by colouring $V$ randomly with $n^{1-\eta}$ colours, giving each vertex 
a colour selected uniformly at random and independently.  
It follows from \eqref{C1} and \eqref{C2} that, with failure probability $o(1)$,
every colour class $V_i$ has size $(1+o(1))n^{\eta}$,
so we may assume that this holds. 
Reordering if necessary, we may assume that $|V_1|\ge|V_2|\ge\cdots$.
Let $W=W_1\cup W_2\cup\cdots$ be a random partition of $W$ such that $|W_i|=|V_i|$ for every $i$.
For $i\ge1$, we set 
$S_i=\bigcup_{j<i}V_j$ and $T_i=\bigcup_{j<i}W_j$ (note that $S_1=T_1=\emptyset$; 
also $S_L=V$ and $T_L=W$, where $L=n^{1-\eta}+1$). 

As before, we will construct a bijection between $V$ and $W$ by 
constructing bijections between $V_i$ and $W_i$ for $i=1,2,\dots$.
However, we need to be a little more careful than in the graph case, as there are more ways for edges to intersect the classes $V_i$ and $W_i$.
For $j=1,\dots,k$, and any $i$, we say that an edge is {\em of type $j$ for $V_i$ or $W_i$}
if it has $j$ vertices in $V_i$ or $W_i$, and the remaining $k-j$ vertices in $S_i$ or $T_i$. 

We now reveal all type 1 edges in $G_2$.
For a $(k-1)$-set $A\subset S_i$, 
the probability that $V_i$ contains $t$ vertices $v$ such that $A\cup\{v\}$
is an edge of $G_2$  is at most
$$ \binom{2n^\eta}{t}q^t = O(n^{\eta t-t/2}) =o(n^{-k}).$$
It follows that, with high probability, for every integer $i$ and every
$(k-1)$-set $A\subset S_i$, $V_i$ contains fewer than $t$ vertices that can be added to $A$ to obtain an edge of $G_2$.
In other words, each $(k-1)$-set in $S_i$ is contained in fewer than $t$ type 1 edges for $V_i$.
 
For each vertex $v\in V_i$, we define the {\em type 1 neighbourhood of $v$} to be the $(k-1)$-uniform hypergraph on $S_i$
with edge set 
$$\{A\subset S_i: A\cup \{v\}\mbox{ is a type 1 edge for $V_i$}\};$$ 
similarly, for vertices in $W_i$, the type 1 neighbourhood is a $(k-1)$-uniform hypergraph on $T_i$.

At the first step of the partitioning process, we take a random bijection between $V_1$ and $W_1$.  The expected number of common edges 
is at most $pqn^{k\eta}=o(1)$, and so with high probability there are no common edges.

Now consider a later stage of the partitioning process: suppose we have constructed a bijection 
$\phi_i:S_i\to T_i$ and wish to extend this to a bijection $\phi_i:S_{i+1}\to T_{i+1}$.  
In constructing our bijection, we will only consider edges of type 1 and 2; we will consider edges of type 3 at the end of the argument.  

We first consider type 1 edges in $V_i$ and $W_i$.  For each $v\in V_i$, we consider the type 1 neighbourhood of $v$ as a subset of $S_i^{(k-1)}$
(rather than as a $k$-uniform hypergraph on $S_{i+1}$). 
The collection of type 1 neighbourhoods of vertices in $V_i$ then defines a
set sequence $\mathcal A$ of subsets of $S_i^{(k-1)}$; similarly, 
the collection of type 1 neighbourhoods of vertices in $W_i$ defines a
set sequence $\mathcal B$ of subsets of $T_i^{(k-1)}$; 
and the bijection $\phi_i$ allows us to identify $S_i^{(k-1)}$ and $T_i^{(k-1)}$.
As in the proof of Theorem \ref{extend}, we wish to apply Lemma \ref{bipa}, so we need to check that its conditions are satisfied.

Let 
\begin{align*}
\widetilde n&=|S_i^{(k-1)}|=|T_i^{(k-1)}|=\Theta(i^{k-1}n^{\eta(k-1)}),\\
\tilde m&=|V_i|=|W_i|=(1+o(1))n^\eta,
\end{align*}  
and note that 
$|\mathcal A|=|\mathcal B|=\tilde m$ and $\tilde m\in [{\tilde n}^{\eta/k},\tilde n]$.

By construction of the partition $(V_j)_{j\ge1}$, 
no element of $S_i^{(k-1)}$  is contained in $t$ sets from $\mathcal A$, 
as each $(k-1)$-set $A\subset S_i$ 
is contained in fewer than $t$ type 1 edges for $V_i$.   The size of each set in $\mathcal A$ has
distribution $\Bi(\widetilde n,q)$.  
Choose a small $\epsilon'>0$, let $K'=2/(\eta\epsilon')$, and then choose a large $K$.
Let $\widetilde q=\max\{Kq,\epsilon'(\log \widetilde n)/\widetilde n\}$.
It follows from Proposition \ref{bbound} that,
provided $K$ is large enough (depending on $K'$),
every set in $\mathcal A$ has size at most $\widetilde n\widetilde q$, 
with failure probability at most
\begin{equation*}
  \widetilde m e^{-K'\widetilde n\widetilde q}
\le n e^{-K'\epsilon'\log{\widetilde n}} 
\le n^{1-K'\epsilon'\eta} =o(1/n).
\end{equation*}
Furthermore, since $\widetilde n\le n^{k-1}$, by choosing $\epsilon$ small enough we get
\begin{equation*}
  p Kq \le K\epsilon \frac{\log n^{k-1}}{n^{k-1}}
\le \epsilon'  \frac{\log\widetilde n}{\widetilde n}
\end{equation*}
and hence $p\widetilde q\le \epsilon' (\log\widetilde n)/\widetilde n$.
We can therefore
apply Lemma \ref{bipa}, to deduce that if $\epsilon'$ is sufficiently small then
with failure probability $O(n^{-2})$ we get the following:
\begin{itemize}
\item a bijection $\phi^*:V_i\to W_i$ such that the corresponding pairs
in the two set sequences are disjoint.  This 
corresponds to a bijection between $V_i$ and $W_i$ so that there are no 
collisions between type 1 edges for $V_i$ and $W_i$.  Also: 
\item for all distinct $u,v\in V_i$ and $x,y\in W_i$, a bijection
$$\phi^{**}:V_i\setminus\{u,v\}\to W_i\setminus\{x,y\}$$
such that there are no collisions of type 1 edges for $V_i$ and $W_i$,
except possibly for edges containing $u$, $v$, $x$ or $y$.
\end{itemize}

The mapping $\phi^*$ deals with collisions between type 1 edges.
However, we must also consider type 2 edges for $V_i$ and $W_i$.
We do not reveal type 2 edges at this stage, but only the number of collisions between type 2 edges created by the mapping $\phi^*$.
There are at most $n^{k-2+2\eta}$ type 2 edges for $V_i$ and $W_i$, and so the probability that $\phi^*$ 
maps any type 2 edge for $V_i$ in $G_2$ to a type 2 edge in $G_1$
is at most $pqn^{k-2+2\eta}\le \log n /n^{1-2\eta}$; the probability that there are at least two collisions is
$O(\log^2n/n^{2-2\eta})=o(1/n)$ (which is small enough to ignore).   
If there are no collisions, then we use $\phi^*$ to extend $\phi_i$.  

This leaves the case when there is one collision between type 2 edges.  
We reveal the edge where this occurs: say $A\cup \{u,v\}$ maps to $A\cup
\{x,y\}$ under $\phi^*$. 
We thus condition on the existence of these two edges in $G_2$ and $G_1$,  
and on this being the only collision.  We shall show the existence of
another mapping $\phi^{**}$ from $V_i$ to $W_i$ that avoids collisions for
both type 1 and type 2 edges with probability at least $1-O(\log n/\sqrt n)$.
Then the probability that we get collisions for both $\phi^*$ and $\phi^{**}$
is $ O( (\log n /n^{1-2\eta})\cdot \log n/\sqrt n)$, which is $o(1/n)$.

Let $D=\lceil 6(\log \widetilde n)/\delta\rceil$.  We choose distinct vertices
$x_1,\dots,x_D,y_1\dots,y_D$ in $W_i$ such that the type 1 neighbourhood of
$u$ is edge-disjoint from the 
type 1 neighbourhoods of $x_1,\dots,x_D$, and the  type 1 neighbourhood of $v$ is edge-disjoint from the
type 1 neighbourhoods of $y_1,\dots,y_D$ 
(the existence of these vertices follows from the minimal degree condition on $H$ in Lemma \ref{bipa}).

We reveal the edges $A\cup\{x_\ell,y_\ell\}$ for each $\ell\le D$: 
since $p\le 1-\delta$, it follows that with probability $1-o(1/n)$ there is
some $\ell$ 
such that  $A\cup\{x_\ell,y_\ell\}$ is not present in $G_1$.  
We then use the appropriate mapping $\phi^{**}$ from 
$V_i\setminus\{u,v\}$ to $W_i\setminus\{x_\ell,y_\ell\}$ that we found
above, and 
extend it by setting $\phi^{**}(u)=x_\ell$ and $\phi^{**}(v)=y_\ell$
so that we have a mapping from $V_i$ to $W_i$.
The mapping $\phi^{**}$ does not cause any collision of type 1 edges.
Finally, we reexamine the type 2 edges for collisions.  
We have ensured that $A\cup\{u,v\}$  does not collide with anything; the probability of a collision involving any edge of form $A\cup\{x_j,y_j\}$
is at most $qD=O(\log n/\sqrt n)$; and the probability of any other
collision is at most $\log n /n^{1-2\eta}=O(1/\sqrt n)$, as before.
(More formally: we have conditioned on 
the edges $A\cup\{x_j,y_j\}$, on
the event that a particular pair of type 2 edges collide, and the event
that no other collisions occur.
If we resample all type 2 edges that are not in the colliding pair
or of form $A\cup\{x_j,y_j\}$, the number of collisions
under $\phi^{**}$ stochastically dominates the number before resampling, giving the same bound.)
So the probability that $\phi^{**}$ yields a collision is $O(\log n/\sqrt
n)$, as required.

It follows that, with probability $1-o(1/n)$, we are able to find a good  bijection between $V_i$ and $W_i$, and extend $\phi_i$ to $\phi_{i+1}$.  
Continuing in this way, we find a bijection from $V$ to $W$ in which there are no collisions between type 1 or 2 edges for any $V_i$, $W_i$.
 
Finally, we reveal all edges of type 3 or more.  There are at most $n^{k-2+2\eta}$ possible edges of type 3 or more, 
and so the probability that
any of these is an edge in both hypergraphs is at most $pqn^{k-2+2\eta}=o(1)$.  The algorithm therefore succeeds with probability $1-o(1)$.
 \end{proof}

\section{Conclusion}\label{conclusion}

We conclude by mentioning a few open questions.

\begin{itemize}
\item The bound in Theorem \ref{upfront} is sharp to within a constant factor.  It is natural to expect that there is some $c=c(k)>0$ such that
almost surely a pair of random $k$-uniform hypergraphs $G_1,G_2\in\mathcal G(n,k,p)$ 
are packable if $p<(c-\epsilon)\sqrt{\log n/n^{k-1}}$ and
are unpackable if $p>(c+\epsilon)\sqrt{\log n/n^{k-1}}$.
Is this correct?  If so, what is the value of $c$?
\item What happens with the results above if we take $G_1=G_2$?  We would expect this to make no difference.  
\item All our examples of unpackable $k$-uniform hypergraphs $G_1$, $G_2$
  have $\Delta(G_1)\Delta(G_2)=\Omega(n^{k-1})$.  
What is the correct bound here? 
\end{itemize}


\begin{thebibliography}{99}

\bibitem{A94}
N. Alon,
Packing of partial designs,
{\em Graphs and Combinatorics} {\bf 10} (1994), 11--18.

\bibitem{BE76} B. Bollob\'as and S. E. Eldridge, 
Maximal matchings in graphs with given maximal and minimal degrees, 
{\em Congressus Numererantium} {\bf XV} (1976), 165--168.

\bibitem{BS}
B. Bollob\'as and A. D. Scott, 
Intersections of random hypergraphs and tournaments,
{\em to appear}.

\bibitem{BLM}
S. Boucheron, 
G. Lugosi, 
and P. Massart, 
\emph{Concentration Inequalities},
Oxford Univ. Press, Oxford, 2013.

\bibitem{C74}
P.A. Catlin,
Subgraphs of graphs, I,
{\em Discrete Mathematics} {\bf 10} (1974), 225--233

\bibitem{C}
D. Conlon,
Hypergraph packing and sparse bipartite Ramsey numbers,
{\em Combinatorics Probability and Computing} {\bf 18} (2009), 913--923

\bibitem{HS} A. Hajnal and E. Szemer\'edi, 
Proof of a conjecture of P. Erd\H os, {\em in} Combinatorial theory and its
applications, II (Proc. Colloq., Balatonf\"{u}red, 1969),
601--623. North-Holland, Amsterdam, 1970. 

\bibitem{JLR}
S. Janson, T. \L uczak and A. Ruci\'nski,
\emph{Random Graphs},
Wiley, New York, 2000.

\bibitem{K14} 
P. Keevash, The existence of designs,
preprint, 2014, 
\texttt{arXiv:1401.3665}.

\bibitem{MNS}
J. Ma, H. Naves and B. Sudakov, 
Discrepancy of random graphs and hypergraphs, 
preprint, 2013, 
\texttt{arXiv:1302.3507}.

\bibitem{RRT}
V. R\"odl, A. Ruci\'nski and A. Taraz, Hypergraph Packing and Graph Embedding,
{\em Combinatorics, Probability and Computing} {\bf 8} (1999), 363--376.

\bibitem{SS78} 
N. Sauer and J. Spencer, Edge disjoint placement of graphs, {\em J. Combinatorial Theory Ser. B} {\bf 25} (1978), 295--302.

\bibitem{T77}
L. Teirlinck,
On making two Steiner triple systems disjoint, 
{\em J. Combinatorial Theory Ser. A} {\bf 23} (1977), 349--350. 

\end{thebibliography}
\end{document}